\documentclass[10pt,a4paper,leqno]{article}
\usepackage[utf8]{inputenc}
\usepackage{amsmath}
\usepackage{graphicx}%
\usepackage{amsfonts}
\usepackage{amssymb}
\usepackage{comment}
\usepackage{hyperref}
\usepackage{enumitem,pstricks,xy,pst-node}
\xyoption{all}
\usepackage{amsthm}
\usepackage{longtable}
\usepackage{mathtools}
\usepackage{caption,subcaption} 
\usepackage{booktabs} 
\usepackage[color]{changebar} 
\cbcolor{red} 
\usepackage{mathrsfs} 
\usepackage{verbatim} 
\usepackage[disable] 
{todonotes}
\usepackage[normalem]{ulem} 
\usepackage{faktor} 
\usepackage{caption} 
\usepackage{nicefrac} 
\usepackage{extarrows} 
\captionsetup[table]{skip=5 pt}
\usepackage{enumitem} 
\newtheorem{theorem}{Theorem}[section]

\newtheorem{proposition}[theorem]{Proposition}

\newtheorem{definition}[theorem]{Definition}
\theoremstyle{definition}

\theoremstyle{remark}

\newcommand{\sfrac}[2]{{#1}/{#2}}

\newcommand{\PP}{\mathbb{P}}

\DeclareMathOperator{\proj}{Proj}
\DeclareMathOperator{\Hb}{H}
\DeclareMathOperator{\sln}{SL}

\DeclareMathOperator{\s}{\mathcal{S}}

\newcommand{\at}{\operatorname{Aut}}

\newcommand{\ladi}{\begin{lastadd}}
\newcommand{\ladf}{\end{lastadd}}
\newcommand{\lrei}{\begin{lastrem}}
\newcommand{\lref}{\end{lastrem}}
\newenvironment{lastadd}
{\cbstart\color{red}}
{\todo{red to remove}\cbend}
\newenvironment{lastrem}
{\cbstart\color{yellow}}
{\cbend}

\newcommand{\bigslant}[2]{{\raisebox{.2em}{$#1$}\left/\raisebox{-.2em}{$#2$}\right.}}

\author{Hanieh Keneshlou
\thanks{Max Planck Institute for Mathematics in Sciences, 04103 Leipzig, Germany. \texttt{hanieh.keneshlou@mis.mpg.de}}
}
\title{Cubic surfaces on the singular locus of the Eckardt hypersurface}
\begin{document}
\maketitle
\begin{abstract}
The Eckardt hypersurface in $\PP^{19}$ parameterizes smooth cubic surfaces with an Eckardt point, which is a point common to three of the $27$ lines on a smooth cubic surface. We describe the cubic surfaces lying on the singular locus of the model of this hypersurface in $\PP^4$, obtained via restriction to the space of cubic surfaces possessing a so-called Sylvester form. We prove that 
inside the moduli of cubics, the singular locus corresponds to a reducible surface with two rational irreducible components intersecting along two rational curves. The two curves intersect in two points corresponding to the Clebsch and the Fermat cubic surfaces. We observe that the cubic surfaces parameterized by the two components or the two rational curves are distinguished by the number of Eckardt points and automorphism groups.
\end{abstract}
\section*{Introduction}
The moduli of cubic surfaces $\mathcal{M}_{cub}$ is defined as the geometric invariant quotient of the space of quaternary cubics $\PP^{19}\cong \PP(\Hb^0(\PP^3, \mathcal{O}_{\PP^3}(3)))$ by the induced action of the special linear group $\sln(4)$. The classical
description of this space is due to G. Salmon \cite{sl} and A. Clebsch \cite{cl}. They proved $\mathcal{M}_{cub}$ is isomorphic to the weighted projective space $\PP(1,2,3,4,5)$ by showing the existence of six homogeneous invariant polynomials $\bar{I}_n$ of degrees $n=8,16,24,32, 40, 100$, which generate the corresponding graded ring of invariants. The first five polynomials are algebraically independent, however $\bar{I}_{100}^2$ can be expressed as a polynomial in terms of the other invariants. In fact, they described a birational model of $\mathcal{M}_{cub}$ as the quotient space of $\PP^4$, the parameter space of cubic surfaces with a so-called Sylvester form, by the action of the Symmetric group $\s_5$. Under this birational equivalence, each $\sln(4)$-invariant polynomial $\bar{I}_n$ can be regarded as continuation of a $\s_5$-invariant polynomial $I_{n}$ of the same degree in the coordinate ring of $\PP^4$. In this way, each of the coordinate hypersurfaces in the moduli $\PP(1,2,3,4,5)$ has a birational model in the quotient space of $\PP^4$ given by the Salmon $\s_5$-invariant.

One can use these invariants to describe further interesting subspaces of the space of cubic surfaces.  By classical results, the vanishing of $\bar{I}_{32}$ is a necessary and sufficient condition for a cubic surface to be singular. In this line, the famous discriminant hypersurface $V(\bar{I}_{32})$ parameterizes singular cubic surfaces generically having a node. Turning to smooth cubic surfaces equipped with 27 lines, a point common to three lines is called an Eckardt point. The Salmon invariant $\bar{I}_{100}$ vanishes on the closure of the locus of smooth cubic surfaces with an Eckardt point. The hypersurface $V(\bar{I}_{100})\subset \PP^{19}$ is called the Eckardt hypersurface.

Let $E= V(I_{100})$ be the model of the Eckardt hypersurface in $\PP^4$, parameterizig the cubic surfaces in Sylvester form having an Ecklardt point. By abuse of language, we may refer to $E$ as Eckardt hypersuarfce as well. The main contribution of this paper is to study the cubic surfaces determined by the singular locus of this hypersurface. We prove (Theorem \ref{my}), up to linear change of coordinates in $\PP^3$, the singular locus determines two 2-dimensional rational families of cubic surfaces intersecting along two rational curves. The two curves intersect at two points which corespond to the Clebsch and the Fermat cubic surfaces possessing respectively $10$ and $18$ Eckardt points. The generic elements of the two families are smooth cubic surfaces with respectively $2$ and $3$ Eckardt points. Moreover, the two rational curves parameterize the cubic surfaces with respectively $4$ and $6$ Eckardt points. The difference in number of the Eckardt points implies then further difference in the automorphism group of the cubic surfaces parameterized in different families.

The paper is structured as follows. In the first section, we recall the construction of the moduli of cubic surfaces as a weighted projective space. Section \ref{locus} deals with the general description of the singular locus of the Eckardt hypersurface $E$ corresponding to a reducible surface with two components inside the moduli of cubics. In Section \ref{des}, we investigate the geometric feature and the differences of the cubic surfaces lying on the two components and the two rational curves. 

Our results rely on the computations done by the computer algebra system \textit{Macaulay2} \cite{m2}, and using the supporting functions in \cite{hani}.
\section*{Acknowledgements}
I would like to thank Bernd Sturmfels for drawing my attention to the world of cubic surfaces. The present work addresses the question $13$ from \cite{bs}. I thank Avinash Kulkarni for his valuable comments on my manuscript.
\section{Preliminaries}\label{pre}
In this section we briefly review the construction of the moduli spaces of cubic surfaces using the Sylvester forms and the Salmon's invariants.\\

Let $\mathbb{K}$ be a field and $\PP^{19}=\PP(V)$ be the parameter space of cubic forms $V=\mathbb{K}[x_0,\ldots,x_3]_{(3)}$ in four variables. One considers the induced action of $G:=\sln(4)$ on $\PP^{19}$, from the standard action on $\PP^{3}$. The geometric invariant quotient
\vskip -0.1cm
$$\mathcal{M}_{cub}:=\bigslant{\PP^{19}}{G}$$
is called the moduli space of cubic surfaces. By the following result, $\mathcal{M}_{cub}$ is a projective variety.
\begin{proposition}
There is an isomorphism $\mathcal{M}_{cub}\cong \proj(R^G)$, where $R$ is the coordinate ring of $\PP^{19}$ and $R^G$ is the ring of invariants.
\end{proposition}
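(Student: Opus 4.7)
The plan is to invoke the foundational construction of geometric invariant theory. Since $G=\sln(4)$ is a semisimple linear algebraic group, it is geometrically reductive, and its natural action on the vector space $V=\Hb^0(\PP^3,\mathcal{O}_{\PP^3}(3))$ provides a canonical $G$-linearization of the induced action on $\PP^{19}=\PP(V)$. My first step would be to appeal to the classical theorem of Hilbert, extended by Nagata to arbitrary reductive groups, which guarantees that the ring of invariants $R^G$ is a finitely generated graded $\mathbb{K}$-algebra. This is essential, since it ensures that $\proj(R^G)$ is a genuine projective variety and not merely a scheme locally of finite type.

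Next, I would use the inclusion of graded rings $R^G\hookrightarrow R$, which induces a rational map $\PP^{19}\dashrightarrow\proj(R^G)$. This map is defined away from the common vanishing locus of the positive-degree invariants, i.e.\ on the semistable locus $(\PP^{19})^{ss}$, and is constant on each $G$-orbit. The substantive content of Mumford's GIT is then that this map realizes $\proj(R^G)$ as the categorical quotient $(\PP^{19})^{ss}/\!/G$, identifying two semistable points precisely when the closures of their $G$-orbits intersect. Since $\mathcal{M}_{cub}$ is by definition this GIT quotient, the desired isomorphism follows directly.

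The main technical point, though standard, is verifying that no subtleties arise from the choice of linearization or from strictly semistable points. Because $V$ is an irreducible $\sln(4)$-representation, the origin is the unique closed orbit in $V$ and the center of $\sln(4)$ acts trivially on $\PP(V)$, so the resulting notions of semistability for $\PP^{19}$ agree with the classical ones and the invariant ring $R^G$ coincides with the ring generated by the Salmon invariants $\bar{I}_n$ of degrees $8,16,24,32,40,100$. This identification is not needed for the abstract isomorphism $\mathcal{M}_{cub}\cong\proj(R^G)$, but it is what renders the proposition effective for the explicit birational model used throughout the rest of the paper.
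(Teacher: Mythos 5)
Your argument is correct and is essentially the same as the paper's, which simply cites the standard GIT statement (Dolgachev, \emph{Lectures on Invariant Theory}, Proposition 8.1) that the quotient $\PP^{19}/\!/G$ defining $\mathcal{M}_{cub}$ is $\proj(R^G)$. Your sketch --- reductivity of $\sln(4)$, Hilbert--Nagata finite generation, and the identification of $\proj(R^G)$ with the categorical quotient of the semistable locus --- is precisely the content of that cited result, so nothing further is needed.
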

\begin{proof}
See \cite{dol3}, Proposition 8.1.
\end{proof}
 The computation from classical invariant theory due to \cite{sl} and \cite{cl} indicates that the graded ring of invariants is generated by homogeneous polynomials $\bar{I}_n$ of degrees
$$  n=8,16,24,32,40,100 $$
such that the first five invariants are algebraically independent. There is a relation expressing $\bar{I}_{100}^2$ as a polynomial in terms of the remaining invariants. Therefore, the graded subalgebra generated by
elements of degree divisible by 8 is freely generated by the first five invariants, and $\mathcal{M}_{cub}$ has structure of the weighted projective space 
$$\mathcal{M}_{cub}\cong \mathbb{P}(1,2,3,4,5).$$
One can restrict the invariants to an open subset of cubic surfaces with somewhat easier form, that is the set of the cubic surfaces with Sylvester forms. This then would allow to express the invariants in terms of symmetric functions of the coefficients of the Sylvester representation.
\begin{theorem}A general cubic surface is projectively isomorphic to a surface in $\PP^4$ given by equations
$$a_0z_0^3+a_1z_1^3+a_2z_2^3+a_3z_3^3+a_4z_4^3=0,\quad z_0+z_1+z_2+z_3+z_4=0.\quad (\ast)$$
The coefficients $a_0,\ldots,a_4$ are determined uniquely up to permutation and a common scaling.
\end{theorem}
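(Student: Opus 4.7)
The plan is to prove both existence and uniqueness of the Sylvester pentahedral form via a combination of a dimension count and apolarity.

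First I would verify that the parameter counts match. The space of quaternary cubics has dimension $\binom{6}{3}=20$, and a Sylvester decomposition $F=\sum_{i=0}^4 a_i\ell_i^3$ depends on $5$ linear forms $\ell_i$ (each contributing $4$ parameters as elements of $V^\ast$) together with $5$ scalar coefficients $a_i$. The obvious scaling action $(\ell_i,a_i)\mapsto(\lambda_i\ell_i,\lambda_i^{-3}a_i)$ cuts this down by exactly $5$, leaving a $20$-dimensional parameter family mapping to the $20$-dimensional affine cone over $\PP^{19}$. So the Waring map
\[
\Phi:(\ell_0,\ldots,\ell_4,a_0,\ldots,a_4)\longmapsto \sum_{i=0}^4 a_i\ell_i^3
\]
is expected to be dominant.

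Next I would establish dominance (i.e.\ existence of the decomposition for a general cubic). The cleanest route is to compute the differential of $\Phi$ at a configuration of five linear forms in general position: the image of $d\Phi$ is spanned by the cubes $\ell_i^3$ together with the products $\ell_i^2 m$ for $m\in V^\ast$, and one checks that these span all of $V^\ast_{(3)}$ when the $\ell_i$ are general. Equivalently, one can invoke Alexander--Hirschowitz (or the older classical argument of Clebsch) to see that $\PP^3$ with $r=5$ is not a defective case for cubics. Either way, a general $F$ lies in the image of $\Phi$.

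For the reduction to the stated normal form, note that any five linear forms on $\PP^3$ are linearly dependent: there exist scalars $c_i$, all nonzero for a general choice, with $\sum c_i\ell_i=0$. Rescaling $z_i:=c_i\ell_i$ and absorbing the scaling into the coefficients gives exactly $\sum z_i=0$ together with $F=\sum b_i z_i^3$, yielding the equations $(\ast)$ after embedding $\PP^3\hookrightarrow \PP^4$ via $z_0+\cdots+z_4=0$.

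For uniqueness, the key step, and the only real obstacle, is to show that the five hyperplanes $\{\ell_i=0\}$ are intrinsic to $F$. The standard tool is apolarity: the apolar ideal $F^{\perp}\subset\sym(V)$ contains, for a general cubic, a distinguished $3$-dimensional linear system of quadrics whose base locus in the dual $\PP^3$ consists of exactly five reduced points, namely the points dual to $\ell_0,\ldots,\ell_4$. This is the content of the apolarity lemma specialized to Waring rank five in $\PP^3$, and it forces the pentahedron to be unique up to relabeling. Once the unordered set $\{\ell_0,\ldots,\ell_4\}$ is intrinsic, the only remaining ambiguity in $(a_0,\ldots,a_4)$ is the simultaneous rescaling $(\ell_i,a_i)\mapsto(\lambda\ell_i,\lambda^{-3}a_i)$ applied uniformly (to preserve $\sum z_i=0$), which reads off on the $a_i$ as a common scalar. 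Combined with the permutation freedom in indexing the pentahedron, this gives the asserted uniqueness statement.
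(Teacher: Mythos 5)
The paper itself does not prove this statement; it simply cites Dolgachev's \emph{Classical Algebraic Geometry}, Corollary 9.4.2 (this is Sylvester's pentahedron theorem), so your proposal has to stand on its own. The existence half of your argument is sound and standard: the parameter count $5\cdot 4+5-5=20$, Terracini's lemma identifying the image of $d\Phi$ with the span of the $\ell_i^2m$, the non-defectivity of $(d,n,r)=(3,3,5)$, and the normalization $\sum z_i=0$ obtained from the (unique, all-coefficients-nonzero) linear relation among five general linear forms in four variables are all correct.

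The uniqueness half, however, has a genuine gap exactly where you locate ``the only real obstacle.'' The claim that $F^{\perp}$ contains a distinguished $3$-dimensional linear system of quadrics whose base locus is the five dual points is false, already on dimension grounds. For a general cubic $F$ in four variables the catalecticant map $\sym^2(V)\to V^{\ast}$ is surjective, so $(F^{\perp})_2$ has dimension $10-4=6$; the quadrics vanishing on five general points of the dual $\PP^3$ form only a $5$-dimensional subspace $I_Z(2)$, i.e.\ a hyperplane inside $(F^{\perp})_2$, and the common base locus of all of $(F^{\perp})_2$ is empty for general $F$ (six independent quadrics in $\PP^3$ have no common zero). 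The apolarity lemma therefore gives you only the inclusion $I_{Z}\subset F^{\perp}$ for \emph{every} polar pentahedron $Z$; it does not single out one such $Z$, and nothing in your argument rules out a second length-five scheme $Z'$ with $I_{Z'}(2)$ a different hyperplane of $(F^{\perp})_2$. Closing this gap is the actual content of Sylvester's theorem. The classical repair is to use an intrinsic object that sees the pentahedron: the Hessian quartic $\operatorname{He}(F)$ of a cubic in Sylvester form has exactly ten nodes, namely the ten vertices $A_{ij}$ of the pentahedron, and the five faces are recovered from these ten points; since the Hessian depends only on $F$ (up to the $\pgl(4)$-action), the unordered set $\{\ell_0,\dots,\ell_4\}$ is determined by $F$, after which your final normalization step correctly reduces the residual ambiguity in the $a_i$ to a permutation and a common scaling. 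Alternatively one can follow Dolgachev's analysis showing that the variety of sums of powers $VSP(F,5)$ is a single reduced point, but some such argument must be supplied.
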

\begin{proof}
See \cite{dolbook}, Corollary 9.4.2.
\end{proof}
\begin{definition}
A cubic surface given by equations as $(\ast) $ is said to have a Sylvester form. It has non-degenerate Sylvester form if $a_i\neq 0$ for all $i=1,\ldots,4$. Otherwise, it has a degenerate Sylvester form.
\end{definition}
Salmon's computations then provide the following easy formulation of the invariants for cubic surfaces possessing a Sylvester representation:
$$I_8=\sigma_4^2-4\sigma_3\sigma_5,\ I_{16}=\sigma_1\sigma_5^3,\ I_{24}=\sigma_4\sigma_5^4,\ I_{32}=\sigma_2\sigma_5^6,\ I_{32}=\sigma_5^8, $$
and
$$ I_{100}=\sigma_5^{18}.\det \begin{pmatrix}
1 & a_0& a_0^2 & a_0^3 & a_0^4 \\
 \vdots & & \ddots& & \vdots\\
1 & a_4& a_4^2 & a_4^3 & a_4^4 \\
\end{pmatrix}$$
where $\sigma_i$ is the elementary symmetric polynomial of degree $i$ in $a_0,\ldots,a_4$. One observes that in this formulation, the $G$-invariant polynomials can be viewed as invariants under the action of the symmetric group $\s_5$. Furthermore, let $\sfrac{\PP^{4}}{\s_5}$ be the quotient of the parameter space $\PP^4$ of Sylvester forms by the action of the symmetric group $\s_5$. This quotient space is isomorphic to the weighted projective space $\PP(1,2,3,4,5) $ equipped with natural coordinates $\sigma_1,\ldots,\sigma_5$. In shadow of the above facts, there is a birational map
\begin{small}
\begin{displaymath}
\xymatrix{
\bigslant{\PP^{4}}{\s_5}\cong \PP(1,2,3,4,5)  \ar@{-->}[r] & \mathcal{M}_{cub}\cong \PP(1,2,3,4,5)
}
\end{displaymath}
\vspace{-0.5cm}
$$\hspace{0.2cm} (\sigma_1:\sigma_2:\sigma_3:\sigma_4:\sigma_5)\longmapsto(I_8:I_{16}:I_{24}:I_{32}:I_{40}).$$
\end{small}
\noindent
\hspace{-0.15cm}with base locus $V(\sigma_4,\sigma_5)$. The birational inverse is defined by
\begin{small}
$$(I_8:I_{16}:I_{24}:I_{32}:I_{40})\longmapsto \left(\dfrac{I_{16}}{\sigma_5^3}:\dfrac{I_{32}}{\sigma_5^6}:\dfrac{I_{24}I_{40}}{\sigma_5^{12}}:\dfrac{I_{24}^2-I_8I_{40}}{4\sigma_5^9}:\dfrac{I_{40}^2}{\sigma_5^{15}}\right), $$
\end{small}
\noindent
\hspace{-0.1cm}and has base locus at the point $Q=(1:0:0:0:0)$. In this way, each of the coordinate hypersurfaces in $\mathcal{M}_{cub}\cong\PP(1,2,3,4,5)$ has a birational model in the quotient space of $\PP^4$ defined by the Salmon $\s_5$-invariant.
\section{Singular locus of the Eckardt hypersurface}\label{locus}
A point where three lines in a smooth cubic surface intersect is called an Eckardt point. In this case, the three lines are cut out by the intersection of the cubic surface with the tangent plane at this point. The Eckardt hypersurface $V(\bar{I}_{100})\subset \PP^{19}$ parameterizes the smooth cubic surfaces with an Eckardt point. 
Let $ E:V(I_{100})\subset \mathbb{P}^4$ be the model of the Eckardt hypersurface inside the space of cubic surfaces possessing a Sylvester form.  The following theorem describes a general cubic surface (up to linear change of coordinates in $\PP^3$) lying on the singular locus $\Gamma\subset E$ of this hypersurface. More precisely, let 
\begin{align*}
 \Phi:\mathbb{P}^4 &\longrightarrow \mathcal{M}_{cub} \cong\mathbb{P}(1,2,3,4,5)\\
\quad (a_0:\cdots:a_4)& \mapsto (I_8:I_{16}:I_{24}:I_{32}:I_{40}). 
\end{align*}
be the dominant map obtained via the quotient map from $\PP^4$, and set $\Delta:=\Phi(\Gamma)$, then we have:
\begin{theorem}\label{my}
With the above notation, $\Delta$ is a reducible surface with two equidimensional rational irreducible components $S_1$ and $S_2$. The generic point of each component corresponds to a smooth cubic surface with the Sylvester form as follows, respectively:
$$S_1:\quad az_0^3+bz_1^3+bz_2^3+cz_3^3+cz_4^3=0,\quad z_0+z_1+z_2+z_3+z_4=0,\quad a,b,c\in \mathbb{C} $$
$$S_2:\quad az_0^3+bz_1^3+bz_2^3+bz_3^3+cz_4^3=0,\quad z_0+z_1+z_2+z_3+z_4=0,\quad a,b,c\in \mathbb{C}.$$
The two components intersect along two rational curves $C_1$ and $C_2$. The two curves intersect at two points which represent the Clebsch and the Fermat cubic surfaces.
\end{theorem}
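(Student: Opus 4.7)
The strategy is to exploit the explicit factorization of Salmon's invariant $I_{100}$. Recognizing the determinant in its formula as the Vandermonde in $a_0,\ldots,a_4$, one obtains
\[
I_{100} \;=\; \sigma_5^{18}\,\prod_{0\le i<j\le 4}(a_j-a_i),
\]
so $E=V(I_{100})\subset\PP^4$ is supported on a union of fifteen hyperplanes: the five coordinate hyperplanes $H_i=V(a_i)$ (each with multiplicity $18$, parameterizing degenerate Sylvester forms) and the ten diagonals $L_{ij}=V(a_i-a_j)$, whose generic points represent smooth Eckardt-point cubics. Set-theoretically $\Gamma$ is the union of the pairwise intersections of these hyperplanes.

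I would then classify these $2$-dimensional pairwise intersections into four $\s_5$-orbits, labelled by the shape of a generic Sylvester form on the component: (i) $H_i\cap H_j$ (two zeros); (ii) $H_i\cap L_{jk}$ (one zero, one coincident pair); (iii) $L_{ij}\cap L_{kl}$ with $\{i,j\}\cap\{k,l\}=\emptyset$, of shape $(2,2,1)$; and (iv) $L_{ij}\cap L_{jk}$ of shape $(3,1,1)$. On the degenerate orbits (i)--(ii) one has $\sigma_5=0$, which forces $I_{16}=I_{24}=I_{32}=I_{40}=0$ while $I_8=\sigma_4^2$ is generically nonzero; these entire orbits therefore map (in the closure, in the case of orbit (i) which lies inside the base locus $V(\sigma_4,\sigma_5)$ of $\Phi$) to the single point $Q=(1:0:0:0:0)\in\PP(1,2,3,4,5)$, which is the image of the Sylvester representative $(1,1,1,1,0)$ of the Fermat cubic. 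The orbits (iii), (iv) are $\PP^2$'s in $\PP^4$, on each of which the residual stabilizer in $\s_5$ acts linearly as a single involution of the parameters (swapping the two ``pair'' values in shape $(2,2,1)$ and the two singletons in shape $(3,1,1)$); the images $S_1$ and $S_2$ are therefore irreducible, rational, and $2$-dimensional.

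For the intersection $S_1\cap S_2$, a multiset of five coefficients admits representatives of both shapes $(2,2,1)$ and $(3,1,1)$ only if two of the three distinct values coincide, which forces the shape to be $(4,1)$ or $(3,2)$. The corresponding strata $\{(a,a,a,a,c)\}$ and $\{(a,a,a,c,c)\}$ in $\PP^4$ are each a $\PP^1$, and their images $C_1,C_2\subset\mathcal{M}_{cub}$ are rational curves; hence $S_1\cap S_2=C_1\cup C_2$.

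Finally, both curves pass through the point $a=c$, which is the Sylvester form $(1,1,1,1,1)$ of the Clebsch cubic $\sum z_i^3=0$. For the Fermat point, on $C_1$ the specialization $c=0$ gives the form $(1,1,1,1,0)$, lying outside the base locus of $\Phi$ and mapping directly to $Q$; on $C_2$ the analogue $(0,0,0,c,c)$ lies in $V(\sigma_4,\sigma_5)$, so a weighted limit must be extracted. Expanding the $I_n$ along $(a,a,a,1,1)$ as $a\to 0$ gives $I_8=-3a^4+O(a^5)$ while $I_{16},I_{24},I_{32},I_{40}$ vanish to orders $9,14,18,24$; the ``weighted order'' $n_k/k$ is strictly smallest for $I_8$ (namely $4$, versus $4.5,\,14/3,\,4.5,\,4.8$), so the correct rescaling in $\PP(1,2,3,4,5)$ again produces the limit $Q$. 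The remaining ``third'' boundary points $(0,0,0,0,c)$ on $C_1$ and $(a,a,a,0,0)$ on $C_2$ yield distinct moduli points, distinguished by the vanishing patterns among the $I_n$, so they do not contribute to the intersection. The principal obstacle throughout is precisely this weighted limit analysis at the base locus of $\Phi$, as it is the only step that is not directly combinatorial.
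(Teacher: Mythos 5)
Your proposal is correct, and it reaches the paper's conclusion by a genuinely different, computer-free route. The paper computes $\mathrm{Sing}(V(I_{100}))$ and its irreducible decomposition directly in \emph{Macaulay2}, finding $30$ linear components ($5$ of type $V(a_i)$, $15$ of type $V(a_i-a_j,a_k-a_l)$, $10$ of type $V(a_i-a_j,a_k-a_j)$), and then pushes them forward under $\Phi$; you instead read everything off from the factorization $I_{100}=\prod_i a_i^{18}\prod_{i<j}(a_j-a_i)$, so that the singular locus of this hyperplane arrangement is visible by hand, and your four $\s_5$-orbits of pairwise intersections reproduce exactly the paper's list. One small correction: since the $H_i=V(a_i)$ occur with multiplicity $18$, the \emph{entire} hyperplanes $H_i$ (three-dimensional, not just their pairwise intersections) are components of $\Gamma$ — this is why the paper lists the five $V(a_i)$ separately — but it is harmless for your argument because $\Phi$ contracts each $H_i$ to $Q$ anyway. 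Your treatment of $S_1\cap S_2$ via the shapes $(4,1)$ and $(3,2)$ matches the paper's identification of $C_1,C_2$ (up to a swap of labels), and your weighted-limit computation at the base locus — checking that $I_8$ has strictly minimal weighted vanishing order along $(a,a,a,1,1)$ as $a\to 0$, so the limit in $\PP(1,2,3,4,5)$ is $Q$ — actually supplies a justification that the paper omits when it simply asserts that both curves pass through the Fermat point; your verification that the other boundary points of the two curves give distinct moduli points likewise goes beyond what the paper records. The only step you (like the paper) leave implicit is that distinct coefficient multisets give distinct points of $\mathcal{M}_{cub}$ on these strata (uniqueness of the Sylvester form), which is needed to conclude that $S_1\cap S_2$ is no larger than $C_1\cup C_2$; this is covered by the quoted uniqueness theorem for general cubics but deserves a sentence.
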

\begin{proof}
An explicit computation (see \texttt{VerifyAssertion1}, \cite{hani}) of the singular locus $\Gamma$ and its decomposition to irreducible components indicates that $\Gamma$ has 30 irreducible linear components as follows:
\begin{itemize}
    \item 5 irreducible components corresponding to the coordinate hyperplanes $$V(a_i)\subset \PP^4,\quad i=0,\ldots,4.$$
\end{itemize}    
    Therefore, under the map $\Phi$, the five components parameterizing the cubic surfaces with a degenerate Sylvester form are contracted to the point $Q$, which corresponds to the Fermat cubic surface
 $$S_F:\quad (x_0^3+x_1^3+x_2^3+x_3^3=0)\subset \PP^3.$$   
    \begin{itemize}
        \item 15 irreducible components as copies of $\mathbb{P}^2$ given by
    $$V_{ijkl}=V(a_i-a_j, a_k-a_l)\subset \PP^4, \quad i\neq j\neq k\neq l$$
  for $(i,j,k,l)$ among 
    \begin{align*}
         &(2,3,1,4),\ (1,4,0,2),\ (1,4,0,3),\ (2,4,1,3),\ (1,3,0,2)\\
         &(1,3,0,4),\ (3,4,1,2),\ (3,4,0,1),\ (3,4,0,2),\ (2,3,0,1)\\
          &(2,3,0,4),\ (1,2,0,3),\ (1,2,0,4),\ (2,4,0,1),\ (2,4,0,3)
    \end{align*}   
    \end{itemize}
Since for a pair of 4-tuples $(i,j,k,l), (i',j',k',l')$, one can map the corresponding components $V_{ijkl}, V_{i'j'k'l'}$, one to another, by a permutation of the coordinates in $\PP^4$ (and hence keeping the hyperplane $V(\sum_{i=0}^4 a_i)$ invariant), the two components, and therefore all the components $V_{ijkl}$'s are mapped to a rational surface $S_1\subset \mathcal{M}_{cub}$ whose general point is a cubic surface with Sylvester form:
$$S_1:\quad az_0^3+bz_1^3+bz_2^3+cz_3^3+cz_4^3=0,\quad z_0+z_1+z_2+z_3+z_4=0,\quad a,b,c\in \mathbb{C}.$$
Furthermore, choosing the coordinate $(a:b:c)$ for $\PP^2$, as the representative of the components $V_{ijkl}$, we have
$$\PP^2\setminus V(abc) \longrightarrow  S_1\setminus \lbrace Q\rbrace.$$
\begin{itemize}
        \item 10 irreducible components as copies of $\mathbb{P}^2$ given by
    $$V_{ijk}=V(a_i-a_j,\ a_k-a_j)\subset \PP^4, \quad i\neq j\neq k$$
    for $(i,j,k)$ among the triples
    \begin{align*}
         &(2,4,1),\ (3,4,2),\ (3,4,1),\ (3,4,0),\ (1,4,0)\\
         &(2,3,0),\ (2,3,1),\ (1,2,0),\ (1,3,0),\ (2,4,0)
    \end{align*}
\end{itemize}
With the same argument as above, for two choices of $(i,j,k)$, the points on the corresponding components differ only by a coordinate permutation of $\PP^4$, and hence are mapped to a rational surface $S_2\subset \mathcal{M}_{cub}$ whose general point is a cubic surface of type:
$$S_2:\quad az_0^3+bz_1^3+bz_2^3+bz_3^3+cz_4^3=0,\quad z_0+z_1+z_2+z_3+z_4=0,\quad a,b,c\in \mathbb{C}.$$
It is clear that the two surfaces intersect along two rational curves $C_1$ and  $C_2$, as the images of the two lines $\ell_1:\PP^1\cong  V(a-c)\subset\PP^2$ and $\ell_2:\PP^1\cong  V(b-c)\subset\PP^2$, respectively. Therefore, they parameterize the cubic surfaces with the Sylvester forms as follows:
$$C_1:\quad az_0^3+bz_1^3+bz_2^3+bz_3^3+az_4^3=0,\quad z_0+z_1+z_2+z_3+z_4=0,\quad a,b\in \mathbb{C}.$$
$$C_2:\quad az_0^3+bz_1^3+bz_2^3+bz_3^3+bz_4^3=0,\quad z_0+z_1+z_2+z_3+z_4=0,\quad a,b\in \mathbb{C} $$
In particular, the two curves intersect at two points, $Q$ and the point which represents the Clebsch cubic surface given by the following equations:
$$S_c:\quad z_0^3+z_1^3+z_2^3+z_3^3+z_4^3=0,\quad z_0+z_1+z_2+z_3+z_4=0.$$
\end{proof}
\vspace{-1cm}
\section{How different are the two components?}\label{des}
In this section, we study the geometric feature and the difference of the cubic surfaces lying on the two components or the two curves. In this direction, an explicit examination demonstrates that the two components stand for two different types of singularities of $E$. In fact, identifying each of the components with its possible birational model as an irreducible component of $\Gamma$ we have:
\begin{theorem}
A general point of $S_1$ (resp. $S_2$) corresponds to an ordinary double (resp. triple) point on $E$. Moreover, the generic point of each of the two rational curves corresponds to an ordinary triple point on $E$.
\end{theorem}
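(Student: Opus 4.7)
The plan is to mirror the strategy of Theorem~\ref{my} and analyse the germ of $E = V(I_{100})$ locally at a generic representative on each stratum. For each of the four cases one fixes a representative irreducible component of $\Gamma \subset \PP^4$ and picks a sufficiently generic point $p$ on it: $p_1 = (a_0:a:a:b:b) \in V_{1234}$ for a generic point of $S_1$, $p_2 = (a_0:a:a:a:a_4) \in V_{123}$ for a generic point of $S_2$, and generic points of the lines $(a:b:b:b:a)$ and $(a:b:b:b:b)$ for $C_1$ and $C_2$ respectively, always with the $a_i$ chosen nonzero and as unrelated as possible. One then works in an affine chart centred at $p$.

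The key input is the explicit factorisation $I_{100} = \sigma_5^{18} \cdot \prod_{i<j}(a_j - a_i)$. Since at each of our representative points $\sigma_5(p) \neq 0$, the factor $\sigma_5^{18}$ is invertible in the local ring at $p$, and the germ of $E$ at $p$ is governed entirely by the Vandermonde factor. Hence the multiplicity of $E$ at $p$ equals the number of pairs $i<j$ with $a_i(p) = a_j(p)$, and its tangent cone is, up to a nonzero scalar, the product of the corresponding linear forms $(a_j - a_i)$ in shifted coordinates at $p$. At $p_1$ this tangent cone is $(a_1 - a_2)(a_3 - a_4)$, a product of two linearly independent linear forms, which is the normal form of an ordinary double point on a threefold hypersurface. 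At $p_2$ it is $(a_1 - a_2)(a_1 - a_3)(a_2 - a_3)$, three distinct linear forms lying in the two-dimensional direction transverse to $V_{123}$, matching the ordinary triple-point normal form.

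For $C_1$ and $C_2$ one carries out the analogous computation at a generic point of the respective line, reads off the tangent cone from the Vandermonde factorisation, and extracts the ordinary triple-point structure on a slice transverse to the singular curve, after accounting for the linear dependencies among the vanishing forms (for instance $(a_2-a_3) = (a_1-a_3) - (a_1-a_2)$ along $V_{123}$, which contains $C_1$). All of these local models and the resulting classification as ordinary double or triple points can be verified by direct computation in \textit{Macaulay2} using the auxiliary routines of \cite{hani}. The main technical point is precisely this $C_1, C_2$ analysis: several factors of the Vandermonde vanish simultaneously along each $C_i$, so the cubic form realising the ordinary triple point is not immediately visible from the naive factorisation and requires a careful transverse-slice argument to isolate.
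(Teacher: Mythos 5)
Your factorisation $I_{100}=\sigma_5^{18}\prod_{i<j}(a_j-a_i)$ is exactly the formula recorded in Section \ref{pre}, and since the paper's own proof consists of a single pointer to the script \texttt{VerifyAssertion2} of \cite{hani}, your local analysis is a genuinely more explicit route than the paper offers. For $S_1$ and $S_2$ it is correct and complete: at a generic point of $V(a_1-a_2,a_3-a_4)$ exactly the two factors $(a_2-a_1)$ and $(a_4-a_3)$ vanish, they are linearly independent, and a generic $2$-plane slice transverse to that $2$-dimensional stratum meets $E$ in a nodal plane curve; at a generic point of $V(a_1-a_2,a_1-a_3)$ exactly the three factors $(a_2-a_1),(a_3-a_1),(a_3-a_2)$ vanish, they span the $2$-dimensional space of linear forms cutting out the stratum, and the transverse slice is three distinct concurrent lines. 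This establishes the first sentence of the theorem.

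The $C_1$, $C_2$ part of your argument does not go through, and the difficulty you defer to ``a careful transverse-slice argument'' is not a removable technicality. At a generic point $p=(a:b:b:b:a)$ of the line lying over $C_1$ exactly \emph{four} Vandermonde factors vanish, namely $(a_2-a_1)$, $(a_3-a_1)$, $(a_3-a_2)$ and $(a_4-a_0)$; since $\sigma_5(p)\neq 0$, the local equation of $E$ at $p$ is a unit times the product of these four pairwise non-proportional linear forms, so the multiplicity of $E$ at $p$ is $4$. Likewise at a generic point $(a:b:b:b:b)$ of the line over $C_2$ six factors vanish and the multiplicity is $6$. (These numbers agree with the Eckardt-point counts $4$ and $6$ given in Section \ref{des}, which is no accident: each coincidence $a_i=a_j$ contributes one Eckardt point and one linear factor.) The multiplicity of a hypersurface at a point is the order of vanishing of its local equation and is preserved by a generic linear slice through the point, so no transverse-slice manipulation can exhibit a point of multiplicity $4$ or $6$ as a triple point. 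Your method therefore does not prove the second sentence of the statement; taken at face value it yields multiplicities $4$ and $6$ along the two curves. To close the gap you must either locate an error in this count (I do not see one), or identify the nonstandard sense in which the computation of \cite{hani} declares these points ``ordinary triple points'' --- for instance a statement about the quotient $\bigslant{E}{\s_5}$ in $\PP(1,2,3,4,5)$ rather than about $E\subset\PP^4$ --- and rework the argument in that setting; it is not enough to assert that the triple-point structure ``can be extracted.''
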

\begin{proof}
See (\texttt{VerifyAssertion2}, \cite{hani}) for the explicit computation.
\end{proof}
The nicely prescribed Sylvester forms of the cubic surfaces on the two components and the curves reveal the number of Eckardt points and their arrangements. More precisely, for a generic cubic surface, let $ L:=L_i,\ i=0,\ldots,4$ be the five linear forms in its Sylvester representation such that any four of them are linearly independent. To this data, one can associate the Sylvester pentahedron with faces
$$\pi:\pi_i:=(L_i=\ell=0),\ \text{where}\ \ell:=\sum_{i=0}^{4}L_i,$$
such that two faces $\pi_i,\pi_j$ intersect along the edge $\beta_{ij}:=(L_i=L_j=\ell=0)$, and the vertex $A_{ij}$ is the intersection point of the three faces with indices different from $i,j$. By classical results (\cite{segre}, page 148), one can see that in this notation, a general cubic surface $S$ on
\begin{itemize}
    \item $S_1$ has 2 Eckardt points $A_{12},A_{34}$ such that the joining line $(z_1+z_2=z_3+z_4=\sum z_i=0)$ is contained in $S$.
    \item $S_2$ has 3 Eckardt points given by the vertices $A_{12}, A_{23}, A_{13}$, which are collinear and the common line is not contained in $S$.
     \item $C_1$ has 4 Eckardt points $A_{12}, A_{23}, A_{13}$ which are collinear and the point $A_{04}$, which is joined to the former points by the three lines through it.
     \item $C_2$ has 6 Eckardt points. The 6 points are the 6 vertices of the quadrilateral intersected on  $\pi_0$ by four other faces of the pentahedron.
\end{itemize}
In particular, the Clebsch cubic surface has $10$ Eckardt points as the $10$ vertices of the Pentahedron. The Fermat cubic surface possesses $18$ Eckardt points.\\

The diversity in number of Eckardt points causes further difference in terms of the automorphism group. An automorphism of the projective space $\PP^3$ fixing a hyperplane and a point is called a homology. The single point is called the center of homology. In terminology of classical projective geometry, a homology of order 2 is usually referred to as an involution. There is a one-to-one correspondence between the set of Eckardt points of a smooth cubic surface and the set of involutions of $\PP^3$ keeping the surface invariant (\cite{dd}, Theorem 9.2). 

To avoid repeating, let $S$ denote the cubic surface corresponding to a general point of one of the two surfaces or the two curves. By classification of the possible groups of automorphisms of a smooth cubic surface, a general cubic lying on $S_1$ has automophism group generated by the two involutions associated to the two Eckardt points and $\at(S)\cong (\mathbb{Z}_2)^2.$
On the other hand, the automorphism group of a general cubic surface lying on $S_2$ is generated by involutions permuting the three Eckardt points and keeping the common line invariant, that is $\at(S)=\s_3$. For a general cubic surface on the curves $C_1$ and $C_2$, one has $\at(S)\cong \s_3\times \s_2$ and $\at(S)\cong \s_4$, respectively. The Clebsch cubic surface has the automorphism group $\at(S_c)\cong \s_5$ acting by permutations of the coordinates in $\PP^4$. Up to isomorphism, the Clebsch surface is the only cubic surface with this automorphism group. The automorphism group of the Fermat cubic surface is isomorphic to $H \rtimes\s_4$
where the subgroup $H$ acts by multiplying the coordinates by a primitive third root
of unity and $\s_4$ acts by permuting the coordinates in $\PP^3$. For more on automorphism group of cubic surfaces classified in any characteristic, we refer the reader to the recent paper \cite{dd}.

\end{document}